\newtheorem{secthm}{Theorem}[section]
\newtheorem{seccor}[secthm]{Corollary}
\newtheorem{seclem}[secthm]{Lemma}
\newtheorem{secprob}[secthm]{Problem}
\newtheorem{secdefn}[secthm]{Definition}
\newcommand{\bE} { {\mathbb E}}
\newcommand{\bP} { {\mathbb P}}
\newcommand{\bR} { {\mathbb R}}
\newcommand{\tr} { {\operatorname{Tr} }}
\newcommand{\cF} { {\mathcal F}}
\newcommand{\cD} { {\mathcal D}}
\def\red{\hfill $\lhd$}
\def\BibTeX{{\rm B\kern-.05em{\sc i\kern-.025em b}\kern-.08em
    T\kern-.1667em\lower.7ex\hbox{E}\kern-.125emX}}
\begin{document}
\title{Distributionally Robust Model Order Reduction for Linear Systems}
\author{Le Liu, Yu Kawano, Yangming Dou and Ming Cao
 \thanks{The work of Liu and Cao was supported in part by the European Research Council (ERC-CoG-771687). The work of Yu Kawano was supported in part by JSPS KAKENHI Grant Number JP22KK0155.}
\thanks{Le Liu, Yangming Dou and Ming Cao are with the Faculty of Science and Engineering, Univerisity of Groningen, 9747 AG Groningen, The Netherlands {\tt \small \{le.liu, y.dou, m.cao\}@rug.nl}}
    \thanks{Yu Kawano is with the Graduate School of Advance Science and Engineering, Hiroshima Univeristy, Higashi-hiroshima 739-8527, Japan 
        {\tt\small ykawano@hiroshima-u.ac.ip}}
}

\maketitle

\begin{abstract}
In this paper, we investigate distributionally robust model order reduction for linear, discrete-time, time-invariant systems. The external input is assumed to follow an uncertain distribution within a Wasserstein ambiguity set. We begin by considering the case where the distribution is certain and formulate an optimization problem to obtain the reduced model. When the distribution is uncertain, the interaction between the reduced-order model and the distribution is modeled by a Stackelberg game. To ensure solvability, we first introduce the Gelbrich distance and demonstrate that the Stackelberg game within a Wasserstein ambiguity set is equivalent to that within a Gelbrich ambiguity set. Then, we propose a nested optimization problem to solve the Stackelberg game. Furthermore, the nested optimization problem is relaxed into a nested convex optimization problem, ensuring computational feasibility. Finally, a simulation is presented to illustrate the effectiveness of the proposed method.
\end{abstract}


\section{Introduction}
Large, complex engineering systems exhibit in various forms, in e.g. robotic, energy and manufacturing systems\cite{mesbahi2010graph, antoulas2005approximation}. Their high dimensionality presents significant challenges in analysis, design, and real-time implementation. Model order reduction provides a viable solution by offering a lower-dimensional approximation. In complex systems, the inputs are often subject to randomness arising from various uncertainties, such as environmental factors, measurement errors and unpredictable disturbances \cite{weinmann2012uncertain}. This inherent randomness can significantly impact system behavior and performance. By incorporating random inputs into the system analysis, we can formulate a more meaningful model order reduction problem, resulting in a better approximation of the original system.

\smallskip

\subsubsection*{Literature review}
Model order reduction for linear systems has been addressed using various methods, including balanced truncation \cite{sandberg2004balanced, cheng2019balanced}, projection-based procedures \cite{benner2015survey}, moment matching \cite{astolfi2010model}, Hankel-norm-minimization \cite{safonov1990optimal} and convex optimization \cite{ibrir2017h, ibrir2018projection, yu2022h2}. Most of these approaches focus on $H_2$ or $H_{\infty}$ performance. When the input follows a distribution, a moment matching method to capture the statistical properties is explored in \cite{scarciotti2021moment}. The balanced truncation method is also employed to approximate statistical properties in \cite{benner2011lyapunov}. However, these studies primarily focus on model order reduction for approximating statistical properties. When the inputs are random but measurable, the model order reduction problem is closely related to $H_2$ performance \cite{hyland1985optimal, bernstein1986optimal} once the distribution of the inputs is known. When the distribution is uncertain, the research on model order reduction remains limited.

\smallskip

\subsubsection*{Contribution}
In this paper, we address the problem of model order reduction for linear, discrete-time, time invariant systems, where the external input is assumed to follow a distribution within a Wasserstein ambiguity set. We first develop an optimization-based approach to perform model order reduction when the input distribution is known. Subsequently, we formulate the distributionally robust model order reduction problem as a Stackelberg game, capturing the interplay between the reduced-order model and the uncertain distribution. Finally, to ensure computational feasibility, we relax the game formulation into a nested convex optimization problem, enabling efficient and tractable solutions.


\smallskip

\subsubsection*{Organization}
Section \ref{sec:prob} states problem definition. Section \ref{sec:result} presents our proposed solution, which is relaxed into a tractable solution in Section \ref{sec:algo}. A simulation result is provided in \ref{sec:simu}.  Conclusions are given in
 Section \ref{sec:con}.


\smallskip

\subsubsection*{Notation}
The sets of real numbers is denoted by $\bR$. For $P \in \bR^{n \times n}$, $P \succ 0$ (resp. $P \succeq 0$) means that $P$ is symmetric and positive (resp. semi) definite. We also use $\mathbb{S}_{++}^n$ (resp. $\mathbb{S}_{+}^n$) to denote the space of positive (resp. semi) definite matrices. The space of symmetric space is denoted by $\mathbb{S}^n$. A probability space is denoted by $(\Omega, \cF, \bP )$, where $\Omega$, $\cF$, and $\bP $ denote the sample space, $\sigma$-algebra, and probability measure, respectively. $\mathcal{N}(\mu, Q)$ represents a Gaussian distribution with mean $\mu$ and covariance $Q$. The expectation of a random variable is denoted by $\bE[\cdot]$.  Given two probability measures $\mu, \nu \in \mathcal{P}_2(\mathbb{R}^n)$, the 2-Wasserstein distance between $\mu$ and $\nu$ is defined by
\[
\mathcal{W}_2(\mu, \nu) = \inf_{\pi \in \Pi(\mu, \nu)} \left( \int_{\mathbb{R}^n \times \mathbb{R}^n} \|x - y\|^2 \, d\pi(x, y) \right)^{\frac{1}{2}},
\]
where $\Pi(\mu, \nu)$ is the set of all couplings of $\mu$ and $\nu$, i.e., the set of all probability measures $\pi$ on $\mathbb{R}^n \times \mathbb{R}^n$ such that their marginals satisfy $\pi(A \times \mathbb{R}^n) = \mu(A), \quad \pi(\mathbb{R}^n \times B) = \nu(B), \quad \forall A, B \in \mathcal{B}(\mathbb{R}^n)
$, where $\mathcal{B}(\mathbb{R}^n)$ is the Borel $\sigma$-algebra \cite{bogachev2007measure}.  

For two multivariate Gaussian distributions $\bP_1 = \mathcal{N}(m_1, \Gamma_1) \quad \text{and } \bP_2 = \quad \mathcal{N}(m_2, \Gamma_2),
$
it holds that \cite{panaretos2019statistical}
\begin{align*}
\mathcal{W}_2^2(\bP_1, \bP_2) &= \| m_1 - m_2 \|^2 + \tr ( \Sigma_1 \\
+& \Sigma_2 - 2 (\Sigma_1^{1/2} \Sigma_2 \Sigma_1^{1/2})^{1/2} ).
\end{align*}

\section{Distributionally robust model order reduction}
Traditional model order reduction techniques often assume the precise knowledge of input distributions \cite{bernstein1986optimal}, a condition that may not hold in real-world applications. For instance, in a power system, model order reduction could be used to approximate the real time voltage output once the current input is measured. While the input current is expected to be of precise magnitude and frequency, random fluctuations are inevitable. Even if random fluctuations can be modeled by a distribution, accurately characterizing them remains challenging.  A common approach to approximating the distribution is through Gaussian approximation. However, real-world randomness rarely conform to a perfect Gaussian distribution \cite{rahimian2022frameworks}. Therefore, to effectively analyze system dynamics under random inputs with uncertain distributions, a distributionally robust model order reduction approach is desirable.

In this section, we formulate a distributionally robust model order reduction problem for linear, discrete time, time invariant systems. 
\label{sec:prob}
Consider the following system,
\begin{align}
\label{sys:origin}
    \Sigma : \begin{cases}
         x_{k+1} &= A x_k + Bu_k,  \\
         y_{k} &= C x_{k},
    \end{cases}
\end{align}
where $x_k \in \bR^n$, $u_k \in \bR^m$ and $y_k \in \bR^p$ are states, control inputs and outputs of the system, respectively. Similar to previous model order reduction problem \cite{bernstein1986optimal}, we also assume $A$ is asymptotically stable.

In this paper, it is assumed that the input follows a specific independent and identical distributed (i.i.d.) distribution, denoted by $\bP$. When the distribution is certain, standard projection-based model order reduction techniques can be applied \cite{bernstein1986optimal} and $H_2$ performance of the model order reduction techniques can be guaranteed. However, in cases where the distribution is uncertain, such guarantees no longer hold.

To formulate a tractable problem, we assume that the distribution is uncertain but constrained within a Wasserstein ambiguity set. i.e.,
\begin{align}
\label{eq:P_dis}
    \bP \in \mathbb{W}_{\rho}(\bar{\bP}):= \{\bP \mid \bE_{x \sim \bP}[x] = 0, \mathcal{W}_2(\bP, \bar{\bP}) \leq \rho \},
\end{align}
where $\bar{\bP}$ is a nominal Gaussian distribution with zero mean and covariance $\bar{Q}$, $\rho \geq 0$ is a the radius of the Wasserstein ambiguity set.  Although this paper considers the  input with $0$ mean, it is possible to generalize the result to the input with any finite fixed mean. Furthermore, the formulation on the input can be regarded as a specific stochastic counterpart of the input presented in \cite{astolfi2010model}.

Given that the distribution of the external input is uncertain, it is necessary to develop a model order reduction method that guarantees the performance for any distribution within $\mathbb{W}_{\rho}(\bar{\bP})$. The central problem addressed in this paper is summarized as follows,

\begin{secprob}
    Given a linear, discrete-time, time invariant system $\Sigma$ as in \eqref{sys:origin}, find a reduced order model
    \begin{align}
\label{sys:r1}
    \hat{\Sigma} : \begin{cases}
        \hat{x}_{k+1} &= \hat{A} \hat{x}_k +  \hat{B}{u}_k, \\ 
        \hat{y}_k &= \hat{C} \hat{x}_k,
    \end{cases}
\end{align}
where $\hat{x}_k \in \bR^{r}$ such that the following objectives are achieved:
\begin{itemize}
    \item The reduced-order system \eqref{sys:r1} is asymptotically stable.
    \item The reduced-order state $\hat{x}_{k}$ has dimension $r \le n$.
    \item  The asymptotical approximation error $\lim_{k \to \infty} \bE[(y_k - \hat{y}_k)^{\top}(y_k - \hat{y}_k)]$ is small for all $\bP$ satisfies \eqref{eq:P_dis}.
\end{itemize}
\end{secprob}
\section{Main results}
\label{sec:result}
In this section, we develop a distributionally robust reduced order model in \eqref{sys:r1} step by step. Inspired by popular approaches in distributionally robust optimization \cite{rahimian2022frameworks} , we first transform the model order reduction problem into an optimization problem once $\bP$ is known. Then, we formulate the distributionally robust model order reduction problem as a Stackelberg game for uncertain $\bP \in \mathbb{W}_{\rho}(\bar{\bP})$. Finally, we solve the Stackelberg game by a nested optimization problem. The computational challenges associated with this approach will be addressed in the following section.

\subsection{Model Order Reduction for Certain Distribution}
Following \cite{ibrir2018projection,yu2022h2}, we first propose an optimization problem that facilitates the design of the reduced-order system once the covariance of $\bP$ denoted as $Q \in \mathbb{S}_{+}^n$ is known. A Stackelberg game that captures distributional robustness can then be formulated based on the optimization problem in Subsection \ref{subsec:drmor}.

The following theorem presents a method for model order reduction with a known $Q$ by solving an optimization problem.
\begin{secthm}
\label{thm:opt}
     Consider the system \eqref{sys:origin}, where the inputs follow a known i.i.d. distribution $\bP$ with covariance $Q \in \mathbb{S}_{+}^m$. Given a reduced dimension $1 \leq r < n$, if there exist a scalar $\gamma \geq 0$, a positive definite matrix $P_{\cD}$, a positive semi-definite matrix $Z_{\cD}$ such that the optimization problem \eqref{eq:opt1} is solvable,
    \begin{subequations}
    \label{eq:opt1}
         \begin{align}
        &\min_{P_{\cD}, Z_{\cD}, \gamma}  \  \gamma \nonumber \\
        &\text{s.t.} \label{con_1}\   P_{\cD} = \begin{bmatrix}
            P_{1,\cD} & P_{2,\cD} \\
            P_{2,\cD}^{\top} & P_{3,\cD}
        \end{bmatrix} \succ 0, P_{1,\cD} \in \mathbb{S}^{n}_{+}, P_{3, \cD} \in \mathbb{S}^{r}_{+},\\
       \label{con_2} &\Psi(P_{\cD}, Z_{\cD}, Q) = \begin{bmatrix}
            \Psi_1 & \Psi_2 \\
            \Psi_2^{\top} & \Psi_3
        \end{bmatrix} \prec 0, \\
       \label{con_4} &  Z_{\cD} = P_{2,\cD} P_{3, \cD}^{-1} P_{2,\cD}^{\top}, \operatorname{rank}(Z_{\cD}) = r,\\
    \label{con_5} &\tr\left(C ( P_{1, \cD} -  Z_{\cD}) C^{\top}\right) \leq \gamma,
    \end{align}
    \end{subequations}
      where $\Psi_1 = AP_{1,\cD}A^{\top} - P_{1,\cD} + B Q B^{\top}, \\
        \Psi_2 =  AZ_{\cD}A^{\top} - P_{1,\cD} + B Q B^{\top},  \\
        \Psi_3 = AZ_{\cD}A^{\top} - P_{1,\cD} + B Q B^{\top} $.
        
        Then,  the reduced order system \eqref{sys:r1} with reduced matrices \begin{align}
        \label{sys:r} 
    &\hat{A}= {P}_{2, \cD}^{\top}{P}_{1,\cD}^{-1}A{P}_{2, \cD}{P}_{3, \cD}^{-1}, \nonumber \\ 
    &\hat{B}={P}_{2, \cD}^{\top}{P}_{1,\cD}^{-1}B,~\text{and}~\hat{C}=C{P}_{2, \cD}{P}_{3, \cD}^{-1}
        \end{align}
        is asymptotically stable and satisfies 
    $\lim_{k \to \infty}\bE[(y_k - \hat{y}_k)^{\top}(y_k - \hat{y}_k)] \leq \gamma^{*}$, where $\gamma^{*}$ is the optimal value given by solving \eqref{eq:opt1}.
\end{secthm}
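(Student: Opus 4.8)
The plan is to pass to the augmented system formed by stacking the original state $x_k$ and the reduced state $\hat x_k$, and to show that a feasible $P_{\cD}$ certifies both stability and a covariance bound for this augmented system. Concretely, I would set $\mathcal{A}=\mathrm{diag}(A,\hat A)$, $\mathcal{B}=[B^\top\ \hat B^\top]^\top$ and $\mathcal{C}=[C\ -\hat C]$, so that the output error obeys $y_k-\hat y_k=\mathcal C[x_k^\top\ \hat x_k^\top]^\top$. Since the zero-mean i.i.d.\ input has covariance $Q$ and the augmented dynamics will be shown stable, the state second moment converges to the unique stationary $\Pi\succeq 0$ solving $\mathcal A\Pi\mathcal A^\top-\Pi+\mathcal BQ\mathcal B^\top=0$, and the asymptotic error equals $\tr(\mathcal C\Pi\mathcal C^\top)$. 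The theorem then reduces to proving (i) $\mathcal A$ is Schur stable, and (ii) $\tr(\mathcal C\Pi\mathcal C^\top)\le \tr(C(P_{1,\cD}-Z_{\cD})C^\top)$.

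The core step is to recognize that \eqref{con_2}, although written as a $2n\times 2n$ inequality, is a lifted encoding of the augmented Lyapunov inequality. Writing $V:=P_{2,\cD}P_{3,\cD}^{-1}$ and substituting the reduced matrices \eqref{sys:r}, I would compute the augmented residual $M:=\mathcal A P_{\cD}\mathcal A^\top-P_{\cD}+\mathcal BQ\mathcal B^\top$ blockwise and check that it is related to $\Psi$ through the congruence $T:=\mathrm{diag}(I_n,\,P_{2,\cD}^\top P_{1,\cD}^{-1})$, namely $M=T\Psi T^\top-\mathrm{diag}(0,\,P_{3,\cD}-P_{2,\cD}^\top P_{1,\cD}^{-1}P_{2,\cD})$. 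The $(1,1)$ and $(1,2)$ blocks match exactly, while the $(2,2)$ blocks differ precisely by the Schur complement of $P_{1,\cD}$ in $P_{\cD}$, which is positive definite because $P_{\cD}\succ 0$ by \eqref{con_1}. Hence $M\preceq T\Psi T^\top$, and since the rank condition \eqref{con_4} forces $P_{2,\cD}$ to have full column rank $r$ (so $T$ is surjective), the congruence preserves the sign and $\Psi\prec 0$ yields $M\prec 0$.

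From $M\prec 0$ and $P_{\cD}\succ 0$ I would read off stability directly: dropping the term $\mathcal BQ\mathcal B^\top\succeq 0$ gives $\mathcal A P_{\cD}\mathcal A^\top\prec P_{\cD}$, so $\mathcal A$, and therefore $\hat A$, is Schur stable, which also guarantees existence of $\Pi$. A standard monotonicity argument for the stable discrete Lyapunov operator then gives $P_{\cD}\succeq\Pi$. Finally, using $y_k-\hat y_k=C(x_k-V\hat x_k)$, i.e.\ $\mathcal C=C[I\ -V]$, I would bound $\tr(\mathcal C\Pi\mathcal C^\top)\le \tr\big(C[I\ -V]P_{\cD}[I\ -V]^\top C^\top\big)$ and simplify the right-hand side: the definitions $V=P_{2,\cD}P_{3,\cD}^{-1}$ and $Z_{\cD}=P_{2,\cD}P_{3,\cD}^{-1}P_{2,\cD}^\top$ collapse it to $\tr(C(P_{1,\cD}-Z_{\cD})C^\top)$, which is $\le\gamma$ by \eqref{con_5} and $\le\gamma^*$ at the optimum.

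The main obstacle is the second paragraph: guessing the correct congruence $T$ and verifying that the gap between $T\Psi T^\top$ and the true augmented residual $M$ is exactly the positive-definite Schur complement, which is what lets an inequality posed in the $2n$-dimensional lifted coordinates certify the $(n+r)$-dimensional error system. The supporting subtlety is the rank constraint \eqref{con_4}: it is needed not only so that the projection in \eqref{sys:r} is well defined (with $P_{3,\cD}$ invertible) but also so that $T$ is surjective, without which the congruence would not preserve negative definiteness. The nonconvex constraints $Z_{\cD}=P_{2,\cD}P_{3,\cD}^{-1}P_{2,\cD}^\top$ and $\mathrm{rank}(Z_{\cD})=r$ play no role in the correctness argument once feasibility is assumed; they are the price of tractability and are handled separately.
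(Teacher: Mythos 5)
Your proposal is correct and follows essentially the same route as the paper's own proof: the same augmented system $(A_{\Delta},B_{\Delta},C_{\Delta})$, the same congruence $T=\operatorname{diag}(I_n,\,P_{2,\cD}^{\top}P_{1,\cD}^{-1})$ with the key identity that the augmented Lyapunov residual equals $T\Psi T^{\top}-\operatorname{diag}(0,\,P_{3,\cD}-P_{2,\cD}^{\top}P_{1,\cD}^{-1}P_{2,\cD})$, the same comparison argument giving $P_{\cD}\succeq \Pi$ for the stationary second moment, and the same collapse of the output-error trace to $\tr\bigl(C(P_{1,\cD}-Z_{\cD})C^{\top}\bigr)\leq\gamma$. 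If anything, you are slightly more careful than the paper on the one point it glosses over, namely that the rank constraint \eqref{con_4} makes $T$ surjective so that $\Psi\prec 0$ transfers to the augmented inequality (though strictly the positive-definite Schur-complement gap already covers any kernel of $T^{\top}$, which also resolves the small tension in your closing remark that the rank condition ``plays no role'').
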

\begin{proof} The proof is in Appendix~\ref{app:1}.
\end{proof}

The upper bound $\gamma^{*}$ for asymptotical approximation error can only be determined after solving the optimization \eqref{eq:opt1}. If the obtained $\gamma^{*}$ is unsatisfactory, this suggests that the system order has been overly reduced. In such a case, a larger 
$r$ can be selected. Furthermore,  the constraint \eqref{con_4} is non-convex, making the optimization \eqref{eq:opt1} computationally inefficient. Nevertheless, this optimization problem remains a fundamental step in addressing distributionally robust model order reduction in Subsection \ref{subsec:drmor}. The issue of computational efficiency will be addressed in Section \ref{sec:algo}.

In fact, the optimization problem \eqref{eq:opt1} does not have a compact solution set and should, therefore, be expressed in terms of an infimum. However, the solution set can be reformulated as a compact set by replacing the strict inequality with a non-strict inequality using a sufficiently small constant. This reformulation enables the use of a standard minimization formulation instead of the infimum. Consequently, we adopt the formulation presented in \eqref{eq:opt1}.

\subsection{Distributionally Robust Model Order Reduction for Uncertain Distribution}
\label{subsec:drmor}
As observed in Theorem \ref{thm:opt}, the distribution $\bP$ plays a crucial role in model order reduction due to its function to constrain the solution set.  Therefore, an uncertain $\bP$ in the Wasserstein ambiguity set $\mathbb{W}_{\rho}(\bar{\bP})$ in \eqref{eq:P_dis} can render the guarantee for model order reduction performance invalid, which is undesirable.

Intuitively, this problem can be modeled by the following Stackelberg game.
\begin{align}
 \label{eq:game}
 \gamma^{\star} = \begin{cases}
        &\min_{P_{\cD}, Z_{\cD}, \gamma}\max_{\bP \in \mathbb{W}_{\rho}(\bar{\bP})} \  \gamma \\
        &\text{s.t.} \ (P_{\cD}, Z_{\cD}) \in \mathbb{B}(Q),
 \end{cases}
 \end{align}
where $\mathbb{B}(Q) := \{(P_{\cD}, Z_{\cD}, \gamma) \mid \eqref{con_1}-\eqref{con_5} \text{ hold for a given covariance } Q \text{ of } \bP\}$ is the solution set.
This problem can be interpreted as follows: the system designer first constructs the reduced-order model, after which the input seeks the worst-case perturbation that maximizes $\gamma$. Directly solving this Stackelberg game is difficult in general since the Wasserstein distance does not have an explicit closed expression~\cite{panaretos2019statistical}.

To provide a solution, we first define the Gelbrich distance on the space of convariance matrices as follows,
\begin{secdefn}
\label{def:gdis}
The Gelbrich distance between the two covariance matrices $\Gamma_1$ and $\Gamma_2 \in \mathbb{S}_{+}^m$ is given by
    \begin{align*}
    G(\Gamma_1, \Gamma_2) = \sqrt{\tr(\Gamma_1 + \Gamma_2 - 2(\Gamma_2^{\frac{1}{2}}\Gamma_1\Gamma_2^{\frac{1}{2}})^{\frac{1}{2}})}.
\end{align*}
\end{secdefn}
Then, the following Gelbrich ambiguity set for the covariance $\bar{Q}$ is defined by,
\begin{align*}
    \mathbb{G}_{\rho}(\bar{Q}) := \{Q \in \mathbb{S}_{+}^{m} \mid \bE_{x \sim \bP}[x] = 0, G(Q, \bar{Q}) \leq \rho \}.
\end{align*}
We are interested in Gelbrich distance since its connection to the 2-Wasserstein distance, as clarified by the following Lemma.
\begin{seclem}
\label{lem:Gdis}
    For any two distributions $\bP_1$ and $\bP_2$ on $\bR^n$ with the same mean and the covariance matrices $\Gamma_1, \Gamma_2 \in \mathbb{S}_{+}^m$, respectively, it holds that $\mathcal{W}_2(\bP_1, \bP_2) \geq G(\Gamma_1, \Gamma_2)$.
\end{seclem}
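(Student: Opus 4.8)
The plan is to reduce the infimum over couplings that defines $\mathcal{W}_2$ to a finite-dimensional matrix optimization over admissible cross-covariances, and then to evaluate that optimization in closed form. First I would fix any coupling $\pi \in \Pi(\bP_1, \bP_2)$ and expand the transport cost. Writing $\mu$ for the common mean and using $\|x-y\|^2 = \|x\|^2 + \|y\|^2 - 2\langle x, y\rangle$, the marginal constraints pin down $\bE_\pi[\|x\|^2] = \tr(\Gamma_1) + \|\mu\|^2$ and $\bE_\pi[\|y\|^2] = \tr(\Gamma_2) + \|\mu\|^2$, while the cross term contributes $\bE_\pi[\langle x,y\rangle] = \tr(K) + \|\mu\|^2$, where $K := \bE_\pi[(x-\mu)(y-\mu)^\top]$ is the cross-covariance induced by $\pi$. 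Collecting terms, the $\|\mu\|^2$ contributions cancel and the integrand reduces to $\tr(\Gamma_1 + \Gamma_2 - 2K)$, so that $\mathcal{W}_2^2(\bP_1,\bP_2) = \inf_\pi \tr(\Gamma_1 + \Gamma_2 - 2K_\pi)$.

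Next I would relax the infimum. Every coupling $\pi$ yields a joint covariance matrix $\left[\begin{smallmatrix} \Gamma_1 & K_\pi \\ K_\pi^\top & \Gamma_2 \end{smallmatrix}\right] \succeq 0$, since it is the covariance of the stacked random vector $(x^\top, y^\top)^\top$ under $\pi$. Hence the set of cross-covariances realizable by couplings is contained in $\mathcal{K} := \{K : \left[\begin{smallmatrix} \Gamma_1 & K \\ K^\top & \Gamma_2\end{smallmatrix}\right] \succeq 0\}$, and enlarging the feasible set can only decrease the infimum, giving
\[
\mathcal{W}_2^2(\bP_1,\bP_2) \;\geq\; \tr(\Gamma_1+\Gamma_2) - 2\,\sup_{K \in \mathcal{K}} \tr(K).
\]
Note that only this inclusion is needed; I do not have to argue that every $K \in \mathcal{K}$ is attained by some coupling, which avoids the more delicate converse direction.

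It then remains to evaluate $\sup_{K\in\mathcal K}\tr(K)$, which I expect to be the main obstacle and the one genuine computation. The key tool is the standard characterization of positive semidefinite block matrices: $K \in \mathcal{K}$ if and only if $K = \Gamma_1^{1/2} C\, \Gamma_2^{1/2}$ for some contraction $C$ with $C^\top C \preceq I$. Substituting and using cyclicity of the trace gives $\tr(K) = \tr\bigl(\Gamma_2^{1/2}\Gamma_1^{1/2}\, C\bigr)$, and maximizing $\tr(MC)$ over contractions $C$ yields the nuclear norm $\|M\|_* = \tr\bigl((M^\top M)^{1/2}\bigr)$ (by the von Neumann trace inequality, with the optimum attained by aligning $C$ with the singular vectors of $M$). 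Taking $M = \Gamma_2^{1/2}\Gamma_1^{1/2}$ gives $M^\top M = \Gamma_1^{1/2}\Gamma_2\Gamma_1^{1/2}$, hence $\sup_{K\in\mathcal K}\tr(K) = \tr\bigl((\Gamma_1^{1/2}\Gamma_2\Gamma_1^{1/2})^{1/2}\bigr) = \tr\bigl((\Gamma_2^{1/2}\Gamma_1\Gamma_2^{1/2})^{1/2}\bigr)$, the last identity holding because the two matrices share the same nonzero spectrum.

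Substituting back produces $\mathcal{W}_2^2(\bP_1,\bP_2) \geq \tr\bigl(\Gamma_1 + \Gamma_2 - 2(\Gamma_2^{1/2}\Gamma_1\Gamma_2^{1/2})^{1/2}\bigr) = G(\Gamma_1,\Gamma_2)^2$, and taking square roots finishes the proof. One technical point to handle carefully is the degenerate case where $\Gamma_1$ or $\Gamma_2$ is singular: the block-matrix factorization above still holds with the PSD square roots, but if I instead preferred a Schur-complement argument (which requires $\Gamma_2 \succ 0$), I would first establish the inequality for $\Gamma_2 + \varepsilon I$ and then pass to the limit $\varepsilon \downarrow 0$, invoking continuity of the trace and of the matrix square root on $\mathbb{S}_+^m$.
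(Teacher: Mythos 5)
Your proof is correct, but it is genuinely different from what the paper does: the paper offers no argument at all for Lemma~\ref{lem:Gdis}, simply deferring to Gelbrich's 1990 paper. Your write-up supplies the self-contained argument behind that citation, and it is sound at every step: the expansion of the transport cost is valid because $\Gamma_1,\Gamma_2\in\mathbb{S}_+^m$ guarantees finite second moments (so $K_\pi$ is well defined by Cauchy--Schwarz), the cancellation of the $\|\mu\|^2$ terms uses the equal-means hypothesis exactly where it is needed, and you correctly observe that only the inclusion ``every coupling induces $K\in\mathcal{K}$'' is required --- dropping the converse (attainability, which for equality would require exhibiting a Gaussian-type coupling) is precisely what makes the one-sided bound cheap. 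The two computational ingredients, the contraction characterization $K=\Gamma_1^{1/2}C\,\Gamma_2^{1/2}$ with $C^\top C\preceq I$ of PSD block matrices (valid without invertibility of either block, as you note) and the von Neumann bound $\sup_{\|C\|\leq 1}\tr(MC)=\tr\bigl((M^\top M)^{1/2}\bigr)$, are both standard and correctly applied, and the identification $\tr\bigl((\Gamma_1^{1/2}\Gamma_2\Gamma_1^{1/2})^{1/2}\bigr)=\tr\bigl((\Gamma_2^{1/2}\Gamma_1\Gamma_2^{1/2})^{1/2}\bigr)$ via the shared nonzero spectrum of $M^\top M$ and $MM^\top$ is right. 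What each approach buys: the paper's citation keeps the letter short; your argument makes the result verifiable in-place, works for arbitrary (non-Gaussian) distributions with finite second moments, and as a byproduct makes transparent why equality holds in the Gaussian case (the optimal $K\in\mathcal{K}$ is then realizable by a jointly Gaussian coupling), which is exactly the fact the paper later invokes in the proof of Theorem~\ref{thm:opt_equiv}. Your fallback $\varepsilon$-regularization for a Schur-complement variant is unnecessary given the contraction lemma, but it is a correct safety net.
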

\begin{proof}
    The proof can be found in \cite{gelbrich1990formula}.
\end{proof}
As a direct consequence of Lemma~\ref{lem:Gdis}, we can conclude that $\mathbb{G}_{\rho}(\bar{Q})$ constitutes an outer approximation for the Wasserstein ambiguity set $\mathbb{W}_{\rho}(\bar{P})$, as summarized below.
\begin{seccor}
    $\mathbb{W}_{\rho}(\bar{\bP}) \subseteq \mathbb{G}_{\rho}(\bar{Q})$
\end{seccor}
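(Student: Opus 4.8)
The plan is to read the inclusion as a statement about covariances: every distribution in the Wasserstein ambiguity set has a covariance matrix lying in the Gelbrich ambiguity set. So I would state at the outset that the containment is understood via the covariance map $\bP \mapsto Q$, fix an arbitrary $\bP \in \mathbb{W}_{\rho}(\bar{\bP})$, denote its covariance by $Q$, and then verify the two defining conditions of $\mathbb{G}_{\rho}(\bar{Q})$ in turn.

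First I would note that membership in $\mathbb{W}_{\rho}(\bar{\bP})$ forces $\bE_{x \sim \bP}[x] = 0$, which is exactly the zero-mean condition appearing in the definition of $\mathbb{G}_{\rho}(\bar{Q})$; the positive semidefiniteness $Q \in \mathbb{S}_{+}^m$ is automatic, since $Q$ is a covariance matrix. Next, because the nominal $\bar{\bP}$ is Gaussian with zero mean and covariance $\bar{Q}$, the two distributions $\bP$ and $\bar{\bP}$ share the same (zero) mean, so the hypothesis of Lemma~\ref{lem:Gdis} is met and the lemma gives $G(Q, \bar{Q}) \leq \mathcal{W}_2(\bP, \bar{\bP})$. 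Combining this with the radius constraint $\mathcal{W}_2(\bP, \bar{\bP}) \leq \rho$ yields $G(Q, \bar{Q}) \leq \rho$, which is the remaining defining condition. Hence $Q \in \mathbb{G}_{\rho}(\bar{Q})$, establishing the claimed containment.

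Since the argument is essentially a one-line application of Lemma~\ref{lem:Gdis}, I do not expect any genuine analytic obstacle. The only subtlety I would be careful about is the type mismatch between the two sets — one lives in the space of probability measures while the other lives in $\mathbb{S}_{+}^m$ — which is why I would make the covariance-map interpretation explicit before starting. I would also double-check the equal-mean hypothesis of Lemma~\ref{lem:Gdis}, which holds here precisely because both $\bP$ and $\bar{\bP}$ are constrained to have zero mean.
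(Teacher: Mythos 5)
Your proof is correct and follows exactly the paper's route: the paper proves the corollary as a direct consequence of Lemma~\ref{lem:Gdis}, which is precisely your one-line argument combining $G(Q,\bar{Q}) \leq \mathcal{W}_2(\bP,\bar{\bP}) \leq \rho$ under the shared zero-mean hypothesis. Your explicit handling of the covariance-map interpretation (measures versus matrices in $\mathbb{S}_{+}^m$) is a reasonable clarification of a type mismatch the paper leaves implicit, but it does not change the substance of the argument.
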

\begin{proof}
    The proof follows directly from Lemma \ref{lem:Gdis}.
\end{proof}

Because $\mathbb{G}_{\rho}(\bar{Q})$ covers $\mathbb{W}_{\rho}(\bar{\bP})$, we can henceforth construct an upper bound for $\gamma^{\star}$ as follows,
\begin{align}
\label{eq:upgame}
 \bar{\gamma}^{\star} = \begin{cases}
        &\min_{P_{\cD}, Z_{\cD},\gamma}\max_{\bP \in\mathbb{G}_{\rho}(\bar{Q})} \  \gamma \\
        &\text{s.t.} \ (P_{\cD}, Z_{\cD}, \gamma) \in \mathbb{B}(Q).
 \end{cases}
 \end{align}
 Surprisingly, the following theorem establishes that the relaxation \eqref{eq:upgame} results in the same optimal value.
\begin{secthm}
\label{thm:opt_equiv}
    For the Stackelberg games \eqref{eq:game} and \eqref{eq:upgame}, the optimization values satisfy ${\gamma}^{\star} = \bar{\gamma}^{\star}$.
\end{secthm}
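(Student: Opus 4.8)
The plan is to reduce the claimed equality of game values to an equality of the \emph{covariance sets} that the adversary can realize, exploiting that the constraint set $\mathbb{B}(Q)$ — and hence each entire game — depends on $\bP$ \emph{only} through its covariance $Q$. Inspecting \eqref{con_1}--\eqref{con_5}, the distribution enters solely through the terms $BQB^{\top}$ appearing in $\Psi$; the objective $\gamma$ and the constraint \eqref{con_5} involve $P_{\cD}$ and $Z_{\cD}$ alone. First I would therefore rewrite both Stackelberg games as robust feasibility problems whose inner maximizations range over
\[
\mathcal{Q}_{\mathbb{W}} := \{\,\mathrm{Cov}(\bP) : \bP \in \mathbb{W}_{\rho}(\bar{\bP})\,\}, \qquad \mathcal{Q}_{\mathbb{G}} := \mathbb{G}_{\rho}(\bar{Q}),
\]
so that $\gamma^{\star}$ is determined by $\mathcal{Q}_{\mathbb{W}}$ and $\bar{\gamma}^{\star}$ by $\mathcal{Q}_{\mathbb{G}}$. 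It then suffices to prove $\mathcal{Q}_{\mathbb{W}} = \mathcal{Q}_{\mathbb{G}}$, since two robust problems built from identical uncertainty sets share the same optimal value.

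Next I would settle the two inclusions. The inclusion $\mathcal{Q}_{\mathbb{W}} \subseteq \mathcal{Q}_{\mathbb{G}}$ is immediate from Lemma~\ref{lem:Gdis} (equivalently, the preceding Corollary): any $\bP \in \mathbb{W}_{\rho}(\bar{\bP})$ with covariance $Q$ obeys $G(Q,\bar{Q}) \le \mathcal{W}_2(\bP,\bar{\bP}) \le \rho$, hence $Q \in \mathbb{G}_{\rho}(\bar{Q})$. Because enlarging the adversary's set can only increase the worst case, this inclusion already delivers the easy inequality $\gamma^{\star} \le \bar{\gamma}^{\star}$.

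The reverse inclusion $\mathcal{Q}_{\mathbb{G}} \subseteq \mathcal{Q}_{\mathbb{W}}$ is the crux, and it is precisely where the Gelbrich distance is \emph{matched} by the Wasserstein distance. Given any $Q \in \mathbb{G}_{\rho}(\bar{Q})$, I would exhibit a single distribution in the Wasserstein ball realizing $Q$, namely the zero-mean Gaussian $\bP = \mathcal{N}(0,Q)$. Since $\bar{\bP} = \mathcal{N}(0,\bar{Q})$ is also a zero-mean Gaussian, matching the closed-form Gaussian $2$-Wasserstein distance recalled in the Notation with Definition~\ref{def:gdis} (the trace term being symmetric in the two covariances) gives
\[
\mathcal{W}_2\big(\mathcal{N}(0,Q),\,\mathcal{N}(0,\bar{Q})\big) = G(Q,\bar{Q}) \le \rho,
\]
so $\bP \in \mathbb{W}_{\rho}(\bar{\bP})$ with $\mathrm{Cov}(\bP) = Q$, i.e. $Q \in \mathcal{Q}_{\mathbb{W}}$. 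Together with the first inclusion this yields $\mathcal{Q}_{\mathbb{W}} = \mathcal{Q}_{\mathbb{G}}$, whence $\gamma^{\star} = \bar{\gamma}^{\star}$.

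The hard part will be the reduction in the first paragraph rather than the distance computation: one must argue rigorously that each game's value depends on the adversary's choice only through $Q$, so that replacing the inner maximization over distributions by a maximization over covariance matrices is genuinely lossless. Once that is justified, the tightness identity $\mathcal{W}_2 = G$ on zero-mean Gaussians does the real work and the two problems become verbatim identical. A minor point worth handling carefully is that the feasible value may only be an infimum rather than an attained minimum; but this subtlety is shared by both games and both reduce to the same uncertainty set, so it does not affect the claimed equality.
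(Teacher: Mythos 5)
Your proposal is correct and is essentially the paper's own argument: both directions rest on the same two ingredients, namely Lemma~\ref{lem:Gdis} (giving $\gamma^{\star} \le \bar{\gamma}^{\star}$ via the inclusion of ambiguity sets) and the zero-mean Gaussian witness $\mathcal{N}(0,Q)$ for which $\mathcal{W}_2 = G$ holds with equality. Your covariance-set-equality framing is just a repackaging of the paper's sandwich argument, which restricts the adversary to Gaussians to define $\underline{\gamma}^{\star} \le \gamma^{\star}$ and then shows $\underline{\gamma}^{\star} = \bar{\gamma}^{\star}$.
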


\begin{proof}
    The proof is in Appendix~\ref{app:2}
\end{proof}

Thanks to Theorem~\ref{thm:opt_equiv}, it can be concluded that the Stackelberg game~\eqref{eq:game} is equivalent to the Stackelberg game~\eqref{eq:upgame}, where the inner maximization is only related with the covariance $Q$. Therefore, we focus on how to solve the Stackelberg game~\eqref{eq:upgame} in the reminder of this section. By similar arguments in \cite{stein2003bi}, it is possible to transform the Stakelberg game into an equivalent semi-infinite programming  as below,
\begin{align}
\label{eq:semi_opt}
 {\gamma}^{\star} = \begin{cases}
        &\min_{P_{\cD},  Z_{\cD}, \gamma}\  \gamma \\
        &\text{s.t.} \ (P_{\cD}, Z_{\cD}, \gamma) \in \mathbb{B}(Q), \forall Q \in \mathbb{G}_{\rho}(\bar{Q}).
 \end{cases}
 \end{align}
Unfortunately, this semi-infinite programming problem remains unsolvable due to the presence of infinitely many constraints. To address this challenge, we seek to reformulate the problem into a finite-dimensional optimization problem. To achieve this, we first introduce a tractable alternative set of constraints for $Q_{\Delta}:=Q-\bar{Q}$. Subsequently, we employ a nested optimization approach to solve the original semi-infinite optimization problem~\eqref{eq:semi_opt}. The following Lemma characterizes the solution set for $Q_{\Delta}$.
 \begin{seclem} 
 \label{lem:constraint}
 The solution set for $Q_{\Delta}$ is a set constrained by a Semi-definite Programming (SDP) condition, i.e.,
      \begin{align*}
          Q_{\Delta} \in \{&Q_{\Delta} \in \mathbb{S}^{m} \mid \exists E_{Q} \in \mathbb{S}_{+}^{m} \text{ with }  \\
          &\tr(Q_{\Delta}+2\bar{Q} - 2E_{Q}) \leq \rho^2, \\
          &\begin{bmatrix}
              \bar{Q}^{\frac{1}{2}} Q_{\Delta} \bar{Q}^{\frac{1}{2}} + \bar{Q}^{2} & E_{Q}\\
              E_{Q} & I
          \end{bmatrix} \succeq 0\}.
      \end{align*}
 \end{seclem}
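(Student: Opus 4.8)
The plan is to rewrite the Gelbrich ball constraint $G(Q,\bar Q)\le\rho$ directly in the variable $Q_\Delta=Q-\bar Q$ and then linearize the matrix square root it contains by introducing the auxiliary variable $E_Q$ together with a Schur complement. First I would substitute $Q=Q_\Delta+\bar Q$ into the squared Gelbrich distance from Definition~\ref{def:gdis}. Using $\bar Q^{1/2}\bar Q\bar Q^{1/2}=\bar Q^2$, the term under the matrix square root becomes
\[
\bar Q^{1/2}Q\bar Q^{1/2}=\bar Q^{1/2}Q_\Delta\bar Q^{1/2}+\bar Q^2=:M ,
\]
so $G(Q,\bar Q)\le\rho$ is exactly $\tr(Q_\Delta+2\bar Q-2M^{1/2})\le\rho^2$. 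Because $\bar Q^{1/2}Q\bar Q^{1/2}\succeq0$ whenever $Q\succeq0$, the matrix $M$ is positive semidefinite and $M^{1/2}$ is well defined; conversely, with $\bar Q\succ0$, the feasibility of $M\succeq0$ forces $Q\succeq0$, so the membership $Q\in\mathbb{S}^m_+$ underlying $\mathbb{G}_\rho(\bar Q)$ is encoded automatically.

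The only genuinely nonconvex object is the term $\tr(M^{1/2})$, and the core step is to represent it through the epigraph-style auxiliary variable $E_Q$. I would establish the identity
\[
\tr(M^{1/2})=\max\{\tr(E_Q):E_Q\in\mathbb{S}^m_+,\ E_Q^2\preceq M\}.
\]
The inequality ``$\ge$'' holds because operator monotonicity of the matrix square root turns $0\preceq E_Q^2\preceq M$ into $E_Q=(E_Q^2)^{1/2}\preceq M^{1/2}$, and monotonicity of the trace then gives $\tr(E_Q)\le\tr(M^{1/2})$; equality is attained at the feasible point $E_Q=M^{1/2}$. Next I would convert the quadratic matrix inequality $E_Q^2\preceq M$ into the displayed linear matrix inequality: since the $(2,2)$ block is $I\succ0$, the Schur complement yields
\[
\begin{bmatrix}M & E_Q\\ E_Q & I\end{bmatrix}\succeq0\iff M-E_Q^2\succeq0 .
\]

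Combining the two steps, the single constraint $\tr(Q_\Delta+2\bar Q-2M^{1/2})\le\rho^2$ holds if and only if there exists $E_Q\in\mathbb{S}^m_+$ with $\tr(Q_\Delta+2\bar Q-2E_Q)\le\rho^2$ and the above LMI, which is precisely the claimed SDP-representable set. The ``$\Leftarrow$'' direction uses $\tr(E_Q)\le\tr(M^{1/2})$, and the ``$\Rightarrow$'' direction uses the maximizer $E_Q=M^{1/2}$. I expect the main obstacle to be the $\max=\tr(M^{1/2})$ identity, since it is exactly what makes the relaxation from the equality $E_Q=M^{1/2}$ to the inequality $E_Q^2\preceq M$ lossless; this rests entirely on the operator monotonicity of $t\mapsto t^{1/2}$ on $\mathbb{S}^m_+$ together with monotonicity of the trace, which I would invoke as the one nontrivial ingredient.
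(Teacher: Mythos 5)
Your proposal is correct and takes essentially the same route as the paper's own proof: substitute $Q=\bar Q+Q_\Delta$ into the Gelbrich constraint, introduce $E_Q\in\mathbb{S}^m_+$ with $E_Q^2\preceq \bar Q^{1/2}Q\bar Q^{1/2}$, and linearize via the Schur complement. The only difference is that you spell out the losslessness of the relaxation (the identity $\tr(M^{1/2})=\max\{\tr(E_Q): E_Q\succeq 0,\ E_Q^2\preceq M\}$, proved via operator monotonicity of the square root and attained at $E_Q=M^{1/2}$), a step the paper's proof invokes only implicitly.
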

 \begin{proof}
     The proof is in Appendix~\ref{app:3}.
 \end{proof}

 Since $Q_{\Delta}$ is constrained by the SDP condition, it is possible to find the worst $Q_{\Delta}$ using the semi-infinite programming for~\eqref{eq:semi_opt}. Afterwards, one can construct a nested optimization to solve \eqref{eq:semi_opt} and hence solve~\eqref{eq:game}. The next theorem summarizes the nested optimization approach to solve~\eqref{eq:game}.
\begin{secthm}
    The optimal value $\tilde{\gamma}^{\star}$ of Algorithm~\ref{alg:opt1} provides an upper bound for $\gamma^{\star}$ in~\eqref{eq:game}, i.e.,
 \begin{align*}
     \tilde{\gamma}^{\star} \geq \gamma^{\star}.
 \end{align*}
\end{secthm}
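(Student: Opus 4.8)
The plan is to exhibit the pair (reduced model, value $\tilde{\gamma}^{\star}$) returned by Algorithm~\ref{alg:opt1} as a feasible point of the semi-infinite program~\eqref{eq:semi_opt}, whose optimal value I will have identified with $\gamma^{\star}$; since~\eqref{eq:semi_opt} is a minimization, feasibility of any candidate forces its objective to dominate the optimum, which is exactly $\tilde{\gamma}^{\star}\ge\gamma^{\star}$. First I would reduce the target quantity: by Theorem~\ref{thm:opt_equiv} we have $\gamma^{\star}=\bar{\gamma}^{\star}$, and by the bi-level-to-semi-infinite reformulation (the argument of~\cite{stein2003bi} already invoked) $\bar{\gamma}^{\star}$ equals the optimal value of~\eqref{eq:semi_opt}. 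It therefore suffices to compare $\tilde{\gamma}^{\star}$ with the optimum of~\eqref{eq:semi_opt}.

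Next I would pin down the follower's side exactly, so that no gap of the wrong sign is introduced there. Using Lemma~\ref{lem:constraint}, I rewrite the universal quantifier $\forall Q\in\mathbb{G}_{\rho}(\bar{Q})$ appearing in~\eqref{eq:semi_opt} through the SDP-representable description of $Q_{\Delta}=Q-\bar{Q}$. Because constraint~\eqref{con_2} is affine in $Q$ (each block of $\Psi$ carries the term $+BQB^{\top}$), robust satisfaction of~\eqref{con_2} over this convex set reduces to a finite SDP, and this is precisely what the inner stage of Algorithm~\ref{alg:opt1} solves. Hence the worst-case covariance is handled without loss, and the only remaining discrepancy between the algorithm and~\eqref{eq:semi_opt} lives on the leader's (designer's) side, through the convex surrogate used in place of the non-convex constraint~\eqref{con_4}.

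The core of the proof is then to show that this surrogate is a \emph{sufficient} condition for the original constraints, i.e.\ that the relaxed feasible set $\tilde{\mathbb{B}}(Q)$ satisfies $\tilde{\mathbb{B}}(Q)\subseteq\mathbb{B}(Q)$ for every $Q$. Concretely, from the algorithm's iterates one reconstructs $(P_{\cD},Z_{\cD})$ obeying the exact relation $Z_{\cD}=P_{2,\cD}P_{3,\cD}^{-1}P_{2,\cD}^{\top}$ with $\operatorname{rank}(Z_{\cD})=r$, while~\eqref{con_1}--\eqref{con_2} and~\eqref{con_5} hold for all $Q\in\mathbb{G}_{\rho}(\bar{Q})$ by the previous step. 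Theorem~\ref{thm:opt}, applied pointwise in $Q$, then certifies asymptotic stability and the bound $\tilde{\gamma}^{\star}$ on the asymptotic error for every admissible $Q$, so $(P_{\cD},Z_{\cD},\tilde{\gamma}^{\star})$ is feasible for~\eqref{eq:semi_opt}. Minimizing over the smaller set can only raise the objective, giving the chain
\[
\tilde{\gamma}^{\star}=\!\!\min_{(P_{\cD},Z_{\cD},\gamma)\in\tilde{\mathbb{B}}(Q)\,\forall Q}\!\!\gamma\;\ge\!\!\min_{(P_{\cD},Z_{\cD},\gamma)\in\mathbb{B}(Q)\,\forall Q}\!\!\gamma=\bar{\gamma}^{\star}=\gamma^{\star}.
\]

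The hard part will be Step three: verifying that the particular convexification adopted for~\eqref{con_4} is a restriction (a sufficient condition yielding $\tilde{\mathbb{B}}(Q)\subseteq\mathbb{B}(Q)$) rather than an enlargement. One must either exhibit an explicit reconstruction of an exactly feasible $(P_{\cD},Z_{\cD})$ from the relaxed solution without decreasing the certified bound, or argue directly that the surrogate LMI together with the rank-compatible construction implies the equality and rank conditions in~\eqref{con_4}. If the adopted relaxation instead enlarged the leader's feasible set, the inequality would reverse; so fixing the \emph{direction} of the relaxation, and confirming that the worst-case $Q$ extracted via Lemma~\ref{lem:constraint} dominates every covariance in $\mathbb{G}_{\rho}(\bar{Q})$, is the decisive point of the argument.
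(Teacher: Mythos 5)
Your high-level skeleton --- reduce $\gamma^{\star}$ to the semi-infinite program \eqref{eq:semi_opt} via Theorem~\ref{thm:opt_equiv}, exhibit the algorithm's output as robustly feasible for it, and conclude since a minimum over a smaller feasible set dominates --- is the same as the paper's. But your execution misreads Algorithm~\ref{alg:opt1} and leaves the actual crux unproved. Step~1 of Algorithm~\ref{alg:opt1} is \emph{not} a robust-feasibility SDP for \eqref{con_2}: it does not involve $(P_{\cD},Z_{\cD})$ at all. It maximizes the scalar $\tr(Q_{\Delta})$ over the SDP set of Lemma~\ref{lem:constraint} and then replaces the whole ambiguity set by the single dominating covariance $\bar{Q}+\beta^{\star}I$. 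Hence your claim that ``the worst-case covariance is handled without loss'' is false --- the substitution is deliberately conservative ($\bar{Q}+\beta^{\star}I$ in general lies outside $\mathbb{G}_{\rho}(\bar{Q})$), which is precisely why the theorem asserts only $\tilde{\gamma}^{\star}\geq\gamma^{\star}$ and not equality. Conservatism is harmless for the direction you need, but you must then actually prove the two facts you defer to your final sentence as ``the decisive point'': (i) $Q \preceq \bar{Q}+\beta^{\star}I$ for every $Q\in\mathbb{G}_{\rho}(\bar{Q})$, extracted from the trace maximization; and (ii) monotonicity of the feasible set in the covariance --- since $BQB^{\top}$ enters every block of $\Psi$ with a plus sign, enlarging $Q$ tightens \eqref{con_2}, so $\mathbb{B}(\bar{Q}+\beta^{\star}I)\subseteq\mathbb{B}(Q)$ for all $Q\in\mathbb{G}_{\rho}(\bar{Q})$. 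Without (i)--(ii) your feasibility claim for \eqref{eq:semi_opt} has no support; with them the paper's proof is complete in two lines, and nothing else is required.

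Second, the part you designate as ``the core of the proof'' --- verifying that a convex surrogate of \eqref{con_4} is a restriction rather than an enlargement --- addresses the wrong algorithm. Algorithm~\ref{alg:opt1}, which is what this theorem concerns, retains the exact non-convex constraint $Z_{\cD}=P_{2,\cD}P_{3,\cD}^{-1}P_{2,\cD}^{\top}$ with $\operatorname{rank}(Z_{\cD})=r$; the structured-$Z_{\cD}$ convexification appears only in Algorithm~\ref{alg:opt} of Section~\ref{sec:algo}, for which the paper makes a separate and weaker suboptimality remark. So no direction-of-relaxation analysis of \eqref{con_4} is needed here, and organizing the argument around a surrogate set $\tilde{\mathbb{B}}(Q)\subseteq\mathbb{B}(Q)$ attacks a nonexistent step while the genuinely needed containment $\mathbb{B}(\bar{Q}+\beta^{\star}I)\subseteq\mathbb{B}(Q)$ remains an unproved aside.
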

\begin{proof}
    It is easy to check that $\bar{Q}+ \beta^{\star}I \succeq Q$ and $\mathbb{B}(\bar{Q}+ \beta^{\star}I) \subseteq  \mathbb{B}(Q), \forall Q \in \mathbb{G}_{\rho}(\bar{Q})$. Therefore, $\tilde{\gamma}^{\star} \geq {\gamma}^{\star}$ holds.
\end{proof}

Since we have $\mathbb{B}(\bar{Q}+ \beta^{\star}I) \subseteq  \mathbb{B}(Q), \forall Q \in \mathbb{G}_{\rho}(\bar{Q})$, the optimal solution obtained by Algorithm \ref{alg:opt} is a suboptimal solution for the Stackelberg game~\eqref{eq:game}. Therefore, we can use Algorithm \ref{alg:opt1} to obtain a reduced order model~\eqref{sys:r1} with matrices in \eqref{sys:r}. Furthermore, Algorithm \ref{alg:opt1} provides an explicit \textit{a priori} upper bound for the approximation error even if $\bP$ is uncertain.

\begin{algorithm}
\caption{Nested Optimization for Distributionally Robust Model Order Reduction}\label{alg:opt1}
\begin{algorithmic}
\Require Define the reduced-order $r$, the system triplet $(A,B,C)$. \\
Step 1. Solve the following convex optimization problem and get optimal value $\beta^{\star}$. \begin{align*}
    &\max_{E_{Q}} \  \tr (Q_{\Delta}) \\
        &\text{s.t.} \ \tr(Q_{\Delta}+2\bar{Q} - 2E_{Q}) \leq \rho^2,\\
        & \ \ \ \begin{bmatrix}
              \bar{Q}^{\frac{1}{2}} Q_{\Delta} \bar{Q}^{\frac{1}{2}} + \bar{Q}^{2} & E_{Q}\\
              E_{Q} & I
          \end{bmatrix} \succeq 0.
\end{align*}\\
Step 2. Solve the following optimization problem with respect to $P_{\cD} \in \mathbb{S}_{++}^{n}$, $Z_{\cD}\in \mathbb{S}_{++}^{r}$ and $\gamma$.
    \begin{align*}
    \tilde{\gamma}^{\star} =\begin{cases}
        &\min_{P_{\cD}, Z_{\cD}, \gamma}  \  \gamma \nonumber \\
        &\text{s.t.} \   P_{\cD} = \begin{bmatrix}
            P_{1,\cD} & P_{2,\cD} \\
            P_{2,\cD}^{\top} & P_{3,\cD}
        \end{bmatrix} \succ 0, \\
        &P_{1,\cD} \in \mathbb{S}^{n}_{+}, P_{3, \cD} \in \mathbb{S}^{r}_{+},\\
        &\Psi(P_{\cD}, Z_{\cD}, Q + \beta^{\star}I) = \begin{bmatrix}
            \Psi_1 & \Psi_2 \\
            \Psi_2^{\top} & \Psi_3
        \end{bmatrix} \prec 0, \\
        &  Z_{\cD} = P_{2,\cD} P_{3, \cD}^{-1} P_{2,\cD}^{\top}, \operatorname{rank}(Z_{\cD}) = r,\\
     &\tr\left(C ( P_{1, \cD} -  Z_{\cD}) C^{\top}\right) \leq \gamma,
     \end{cases}
    \end{align*}
\\
Step 3. Write down the reduced order system dynamics as according to \eqref{sys:r}.
\end{algorithmic}
\end{algorithm}
 
  \section{Tractable Convex Optimization Solution}
  \label{sec:algo}
  In the previous section, we have relaxed the distributionally robust model order reduction problem into a nested optimization problem. However, $(P_{\cD}, Z_{\cD}) \in \mathbb{B}(\bar{Q}+ \beta^{\star}I)$ is not a convex constraint, which prohibits efficient solutions. In order to have a tractable solution, Algorithm~\ref{alg:opt1} is relaxed  into a convex nested optimization, as summarized by Algorithm \ref{alg:opt}. We refer to this approach as distributionally robust optimization-based model order reduction (DROMOR).
\begin{algorithm}
\caption{Distributionally Robust Optimization-Based Model Order Reduction }\label{alg:opt}
\begin{algorithmic}
\Require Define the reduced-order $r$, the system triplet $(A,B,C)$. \\
Step 1. Solve the following convex optimization problem and get optimal value $\beta^{\star}$. \begin{align*}
    &\max_{E_{Q}, Q_{\Delta}} \  \tr (Q_{\Delta}) \\
        &\text{s.t.} \ \tr(Q_{\Delta}+2\bar{Q} - 2E_{Q}) \leq \rho^2,\\
        & \ \ \ \begin{bmatrix}
              \bar{Q}^{\frac{1}{2}} Q_{\Delta} \bar{Q}^{\frac{1}{2}} + \bar{Q}^{2} & E_{Q}\\
              E_{Q} & I
          \end{bmatrix} \succeq 0.
\end{align*}\\
Step 2. Solve the following convex optimization problem with respect to $P_{\cD} \in \mathbb{S}_{++}^{n}$, $Z_{\cD}\in \mathbb{S}_{++}^{r}$ and $\gamma$.
    \begin{align*}
    \tilde{\gamma}^{\star} =\begin{cases}
        &\min_{P_{\cD}, Z_{\cD}}  \  \gamma \nonumber \\
        &\text{s.t.} \   Z_{\cD} = \begin{bmatrix}
            Z_{1,\cD} & 0 \\
            0 & 0
        \end{bmatrix} \succeq 0, \\
        &P_{1,\cD} \succ 0, Z_{1,\cD} \succ 0.\\
        &\Psi(P_{\cD}, Z_{\cD}, Q + \beta^{\star}I) = \begin{bmatrix}
            \Psi_1 & \Psi_2 \\
            \Psi_2^{\top} & \Psi_3
        \end{bmatrix} \prec 0, \\
        &  P_{1,\cD} - Z_{\cD} \succ 0, \\
     &\tr\left(C ( P_{1, \cD} -  Z_{\cD}) C^{\top}\right) \leq \gamma,
     \end{cases}
    \end{align*}
\\
Step 3. Obtain the Schur decomposition of the matrix $Z_{1,\cD}$, i.e., $Z_{1,\cD} \gets U T U^{\top}$.\\
Step 4. $P_{2,\cD} \gets \begin{bmatrix}
    U \\ 0
\end{bmatrix}$ and $P_{3,\cD} \gets T^{-1}$.\\
Step 5. Write down the reduced order system dynamics as according to \eqref{sys:r}.
\end{algorithmic}
\end{algorithm}

In fact, we convexify the problem by choosing the structure of the matrix $Z_{\cD}$. Generally, there are many other possibilities to choose the structure of the matrix $Z_{\cD}$. The structure we choose is suitable when the pair $(A,C)$ is in the observable canonical form \cite{ibrir2018projection}. Therefore, the original system~\eqref{sys:origin} can be transformed to the observable canonical form first and then  Algorithm~\ref{alg:opt} can be used to obtain a reduced-order model. For more discussions about the choice of $Z_{\cD}$, see \cite{ibrir2018projection}.
\section{Numerical Experiment}
\label{sec:simu}
In this section, we use a numerical example to validate the results proposed in this paper. We employ a standard mechanical system as described in \cite{cheng2019balanced}, where we define $C = \begin{bmatrix}
    1 & 0 & 0 & 0\end{bmatrix}$, $D = \begin{bmatrix}
    5 & -1 \\ -1 & 5
\end{bmatrix}$ and $M = \frac{1}{2}I_2$
. For further details, refer to \cite{cheng2019balanced}. Using zero order hold discretization, the corresponding discrete-time system is given by
\begin{align*}
    &A = \begin{bmatrix}
        0.82 & -0.02 & 0.17 & 0.03 \\
     -0.02 & 0.82 & 0.03 & 0.17\\
     -0.08 & -0.01 & -0.01 & -0.01\\
     -0.01 & -0.08 & -0.01 & -0.02
    \end{bmatrix},  
    \end{align*}
    \begin{align*}
B = \begin{bmatrix}
        0.17 & 0.03 \\ 0.03 & 0.17 \\ 0.09 & 0.02 \\ 0.02 & 0.09
    \end{bmatrix}, C = \begin{bmatrix}
        1 & 0 & 0 & 0
    \end{bmatrix}.
\end{align*}
The center of Wasserstein ambiguity set in \eqref{eq:P_dis} is defined as $\bar{\bP} = \mathcal{N}(0, \bar{Q})$ with $\bar{Q} = \begin{bmatrix}
    0.01 & 0 \\ 0 & 1
\end{bmatrix}$. Suppose $\rho^2 = 2$ and the actual distribution $\bP$ is defined to be $\bP = \mathcal{N}(0, {Q})$, where ${Q} = \begin{bmatrix}
    1 & 0 \\ 0 & 0.01
\end{bmatrix}$. It can be checked that $\bP \in \mathbb{W}_{\rho}(\bar{\bP})$. Then, the reduced order model can be calculated according to Algorithm \ref{alg:opt} as follows,
\begin{align*}
    &\hat{A} = \begin{bmatrix}
        0.78 &	-0.03 \\
-0.03 &	0.83
    \end{bmatrix}, \hat{B} = 10^{-9}\times\begin{bmatrix}
        -4.04 & -8.83\\ 4.00 & 0.83
    \end{bmatrix}, \\
    &\hat{C} = 10^{7}\times\begin{bmatrix}
        -0.62 & -2.2
    \end{bmatrix}.
\end{align*}

To show the effectiveness of the result, we also generate reduced order model \eqref{sys:r1} by balanced truncation, where the reduced order $r=2$ and the balanced truncation method use $\bar{\bP}$ directly. The approximation results are shown in Figure \ref{fig:y}. The absolute approximation error with the two methods are compared in Figure \ref{fig:e}.

\begin{figure}
    \centering
    \includegraphics[width=1\linewidth,height=4cm]{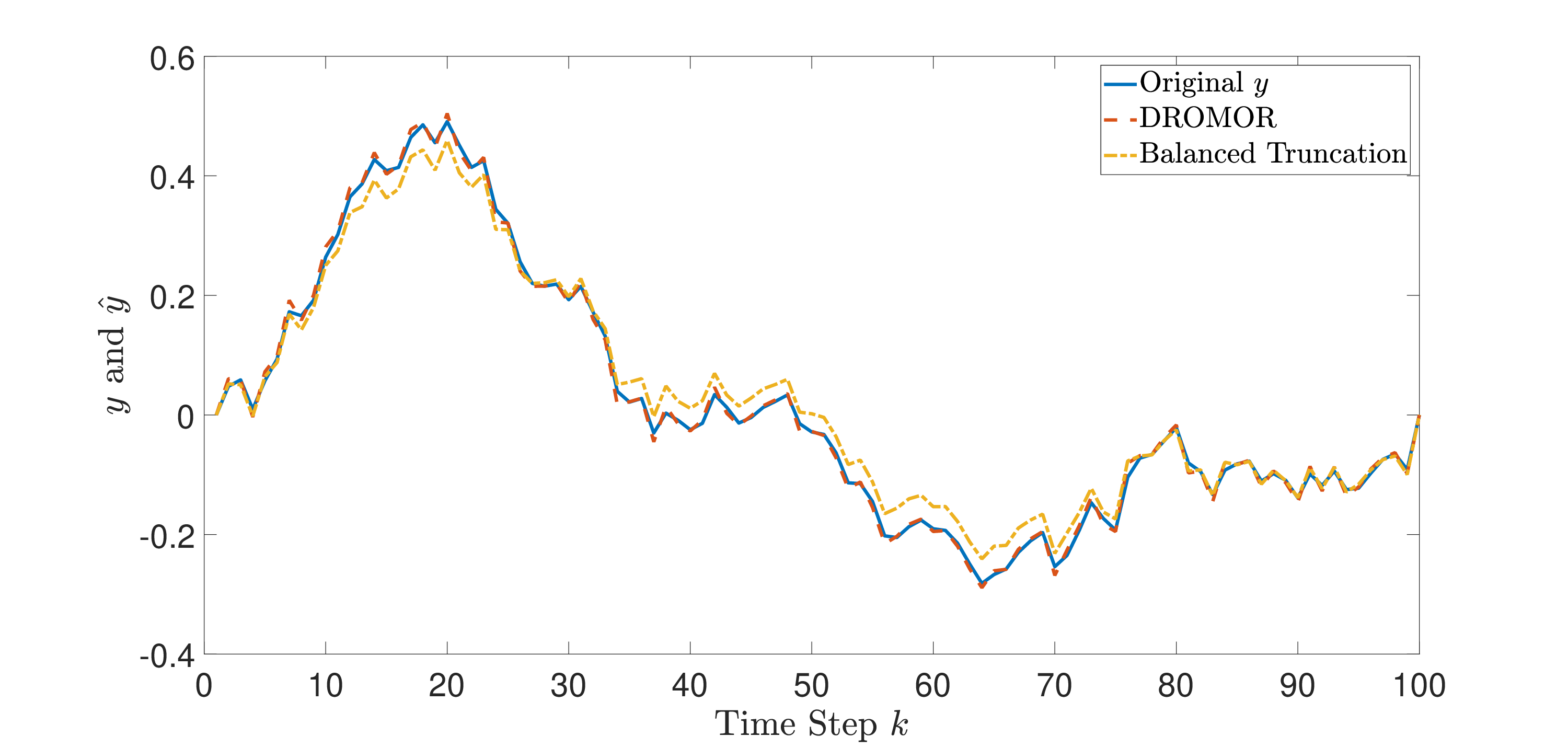}
    \caption{Performance Results by DROMOR and Balanced Truncation.}
    \label{fig:y}
\end{figure}

\begin{figure}
    \centering
    \includegraphics[width=1\linewidth,height=4cm]{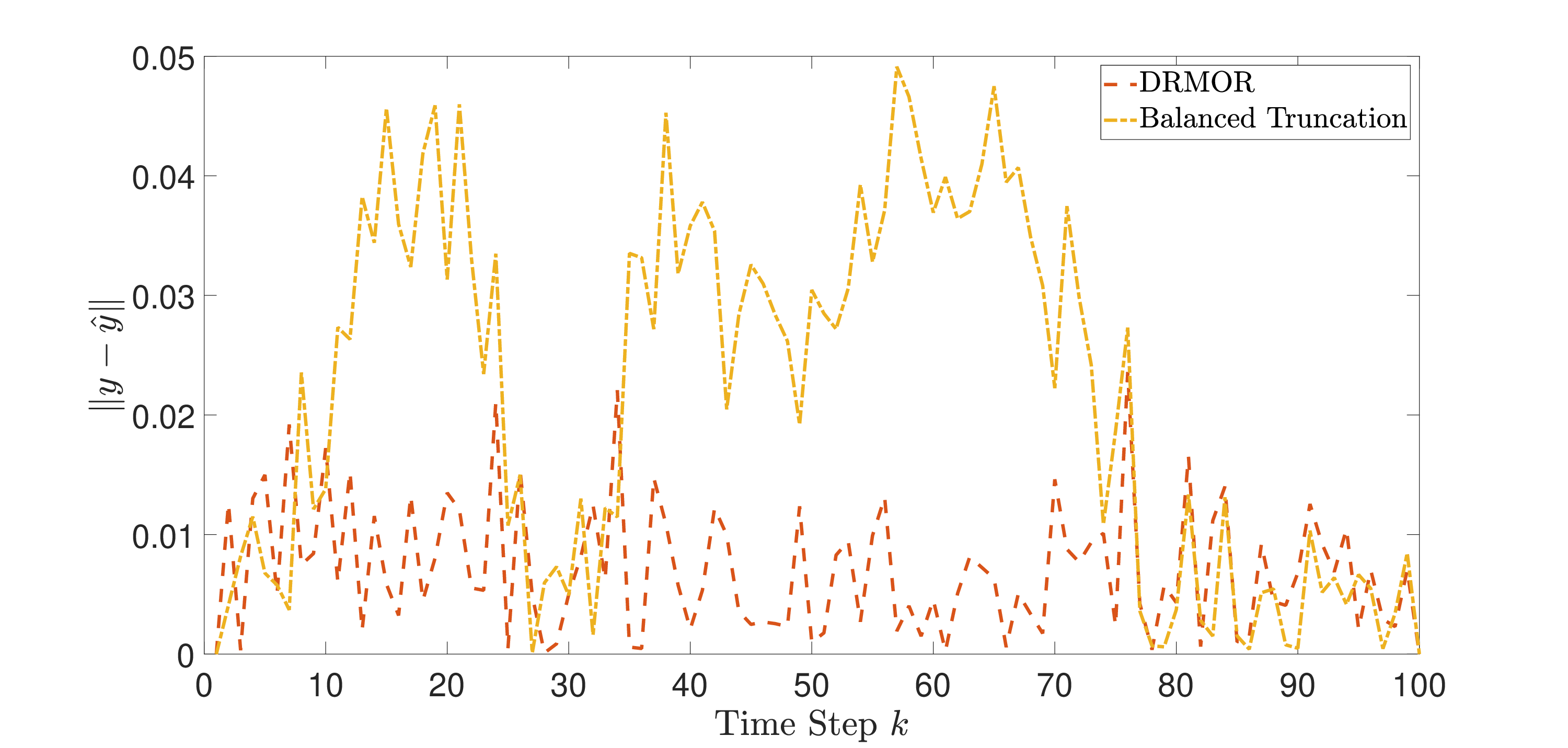}
    \caption{Absolute Approximation Error by DROMOR and Balanced Truncation.}
    \label{fig:e}
\end{figure}

From Figure \ref{fig:y} and \ref{fig:e}, it can be observed the approximation performance with DROMOR is better. We can conclude that DROMOR shows a better performance than balanced truncation when the distribution of the inputs is uncertain.

\section{Conclusion}
\label{sec:con}
In this paper, we have established a novel optimization method for distributionally robust model order reduction. Although the original problem is a intractable Stackelberg game, we have relaxed it into a nested convex optimization. Moreover, the error bound for the system has been provided by the result of the nested convex optimization. For future works, distributionally robust model order reduction for nonlinear system is of interest.
\appendices
\section{Proof of Theorem \ref{thm:opt}}
\label{app:1}
 First, we show that $\hat{A}$ is asymptotically stable. Consider the dynamics of the following augmented system, which consists of the states of the original system \eqref{sys:origin} and the reduced-order model \eqref{sys:r1}, and can be expressed as
\begin{subequations}
    \begin{align*}
  \begin{bmatrix}
      x_{k+1} \\ \hat{x}_{k+1} 
  \end{bmatrix} = A_{\Delta} \begin{bmatrix}
      x_{k} \\ \hat{x}_{k} 
  \end{bmatrix} + B_{\Delta}u_k,
\end{align*}
\begin{align*}
    e_{k}: = y_{k} - \hat{y}_k = C_{\Delta}\begin{bmatrix}
      x_{k} \\ \hat{x}_{k}
  \end{bmatrix},
\end{align*}
\end{subequations}
where $A_{\Delta} = \begin{bmatrix}
    A & 0 \\ 0 & \hat{A}
\end{bmatrix}$, $B_{\Delta} = \begin{bmatrix}
     B \\ \hat{B}
\end{bmatrix}$ and $C_{\Delta} = \begin{bmatrix}
    C & -\hat{C}
\end{bmatrix}$. 
From the Lyapunov theory, we can show that $A_{\Delta}$ is asymptotically stable if there exists a matrix $P_{\Delta} \in \mathbb{S}_{++}^{n}$ such that
\begin{align}
\label{eq:glya}
    A_{\Delta} \hat{P}_{\Delta} A_{\Delta}^{\top} - \hat{P}_{\Delta} + B_{\Delta} Q B_{\Delta}^{\top} \prec 0.
\end{align}
Next, we set $\hat{P}_{\Delta} = P_{\cD} $ and verify that $\hat{P}_{\Delta}$ satisfies \eqref{eq:glya}. We can show that \eqref{eq:glya} is equivalent to
\begin{align*}
    \Theta := \begin{bmatrix}
        \Theta_1 & \Theta_2\\
        \Theta_2^{\top} & \Theta_3
    \end{bmatrix} \prec 0 ,
\end{align*}
where
\begin{align*}
    \Theta_1 =& A P_{1,\cD} A^{\top} -  P_{1,\cD}+ B Q B^{\top}, \\
    \Theta_2 =& A P_{2,\cD} \hat{A}^{\top} -  P_{2,\cD}+ B Q \hat{B}^{\top},  \\
    \Theta_3 =& \hat{A} P_{3,\cD} \hat{A}^{\top} -  P_{3,\cD}+ \hat{B} Q \hat{B}^{\top}.
\end{align*}
By Schur Complement Lemma and the positive definiteness of $P_{\cD}$, it yields that $P_{3,\cD} \succ P_{2,\cD}P_{1,\cD}^{-1}P_{2,\cD}^{\top}$. Therefore, we have 
\begin{align}
\label{eq:1_1}
    \begin{bmatrix}
        \Theta_1 & \Theta_2\\
        \Theta_2^{\top} & \Theta_3
    \end{bmatrix} \preceq \begin{bmatrix}
        \Theta_1 & \Theta_2\\
        \Theta_2^{\top} & \Theta_3 + P_{3,\cD} - P_{2,\cD}P_{1,\cD}^{-1}P_{2,\cD}^{\top}
    \end{bmatrix}.
\end{align}
Recall  that $\hat{A} = P_{2, \cD}^{\top} P_{1, \cD}^{-1} A P_{2, \cD} P_{3, \cD}^{-1}$ and ${P}_{2, \cD}^{\top}{P}_{1,\cD}^{-1}B$, the right hand side of~\eqref{eq:1_1} is equivalent to
\begin{align*}
    &\begin{bmatrix}
        I_n & 0 \\
        0 &  P_{2, \cD}^{\top}P_{1, \cD}^{-1}
    \end{bmatrix}\begin{bmatrix}
        \Psi_1 & \Psi_2\\
        \Psi_2^{\top} & \Psi_3
    \end{bmatrix} \begin{bmatrix}
         I_n & 0 \\
        0 & P_{1, \cD}^{-1} P_{2, \cD} 
    \end{bmatrix} \prec 0.
\end{align*}
Consequently, the matrix $\Theta$ is negative definite if $\Psi( P_{\cD}, Z_{\cD}, Q)$ is negative definite. Since condition \eqref{con_2} holds, we have established that $A_{\Delta}$ is asymptotically stable and hence $\hat{A}$ is asymptotically stable.

Next, we prove that $\hat{P}_{\Delta} - P_{\Delta}$ is positive definite, where $P_{\Delta}$ is the solution of the Lyapunov equation for $(A_{\Delta}, B_{\Delta})$, i.e., 
\begin{align}
    \label{eq:lyap} A_{\Delta} P_{\Delta} A_{\Delta}^{\top} - P_{\Delta} + B_{\Delta}B_{\Delta}^{\top} = 0,
\end{align} where the existence is guaranteed by the asymptotical stability of $A_{\Delta}$. This result will be instrumental in establishing an upper bound for $\lim_{k \to \infty}\bE [(y_k - \hat{y}_k)^{\top}(y_k - \hat{y}_k)]$.

By making the difference between \eqref{eq:glya} and \eqref{eq:lyap}, we have
\begin{align}
\label{eq:dif_lya}
    A_{\Delta} (\hat{P}_{\Delta} - P_{\Delta}) A_{\Delta}^{\top} + \hat{P}_{\Delta} - P_{\Delta} \prec 0,
\end{align}
 Since $A_{\Delta}$ is asymptotically stable, we can conclude from \eqref{eq:dif_lya} and the Lyapunov theory that $\hat{P}_{\Delta} - P_{\Delta}$ is positive definite. 

 In the last step, we prove that $\lim_{k \to \infty}\bE [(y_k- \hat{y}_k)^{\top}(y_k - \hat{y}_k)] \leq \gamma$. A direct computation yields
\begin{align*}
    &\lim_{k \to \infty}\bE [(y_k - \hat{y}_k)^{\top}(y_k - \hat{y}_k)] = \tr(C_{\Delta} P_{\Delta} C_{\Delta}^{\top}) \\
    = & \tr (C_{\Delta}\hat{P}_{\Delta}C_{\Delta}^{\top}) - \tr (C_{\Delta}(\hat{P}_{\Delta}- P_{\Delta})C_{\Delta}^{\top}) \\
    \leq & \tr (C_{\Delta}\hat{P}_{\Delta}C_{\Delta}^{\top})\\
    = & \tr (CP_{1,\cD}C^{\top})+ \tr (\hat{C}P_{3,\cD}\hat{C}^{\top}),
\end{align*}
where the inequality holds since $\hat{P}_{\Delta} - P_{\Delta} \succeq 0$. Substituting $\hat{C} = C P_{2,\cD}P_{3,\cD}^{-1}$ into the last equation yields
\begin{align*}
    &\tr ( CP_{1,\cD}C^{\top})+ \tr ( \hat{C}P_{3,\cD}\hat{C}^{\top}) \\
    =& \tr ( CP_{1,\cD}C^{\top}) - \tr ( CP_{2, \cD}P_{3,\cD}^{-1}P_{2, \cD}^{\top}C^{\top} )\\
    = & \tr \left(C( P_{1, \cD} -  Z_{\cD}) C^{\top}\right) \leq \gamma,
\end{align*}
which completes the proof.
\red
\section{Proof of Theorem \ref{thm:opt_equiv}}
\label{app:2}
We first establish a lower bound for $\gamma^{\star}$ by constructing the following optimization problem.
    \begin{align}
        \label{eq:downgame}
        \underline{\gamma}^{\star} = \begin{cases}
        &\min_{P_{\cD},  Z_{\cD}}\max_{\bP \in \mathbb{W}_{\rho}(\bar{\bP})\cap \mathcal{N}_0} \  \gamma \\
        &\text{s.t.} \ (P_{\cD}, Z_{\cD}) \in \mathbb{B}(Q).
         \end{cases}
    \end{align}
    where $\mathcal{N}_0 : = \{\bP \mid \bP = \mathcal{N}(0,\Gamma), \Gamma \in \mathbb{S}_{+}^n\}$. Since $\mathbb{W}_{\rho}(\bar{\bP})\cap \mathcal{N}_0 \subseteq \mathbb{W}_{\rho}(\bar{\bP})$, the inner maximization is constrained to a smaller set, which yields that
    \begin{align*}
        \max_{\bP \in \mathbb{W}_{\rho}(\bar{\bP})\cap \mathcal{N}_0} \  \gamma \leq \max_{\bP \in \mathbb{W}_{\rho}(\bar{\bP})}  \gamma.
    \end{align*}
    This further implies that
    \begin{align*}
        \underline{\gamma}^{\star} \leq \gamma^{\star}.
    \end{align*}
    Recall that the 2-Wasserstein distance between two Gaussian distribution $\bP_1 = \mathcal{N}(0, \Gamma_1)$ and $\bP_2 = \mathcal{N}(0, \Gamma_2)$ satisfies 
\begin{align*}
    \mathcal{W}_2(\bP_1, \bP_2) = G(\Gamma_1, \Gamma_2),
\end{align*}
\eqref{eq:downgame} can be rewritten as
\begin{align*}
        \underline{\gamma}^{\star} = \begin{cases}
        &\min_{P_{\cD}, Z_{\cD},\gamma}\max_{Q} \  \gamma \\
        &\text{s.t.} \ G(Q, \bar{Q}) \leq \rho\\
        &\ \ \ (P_{\cD}, Z_{\cD}) \in \mathbb{B}(Q),
         \end{cases}
    \end{align*}
    which is equivalent to the Stackelberg game~\eqref{eq:upgame}. Therefore, we have
    \begin{align*}
        \underline{\gamma}^{\star} = \bar{\gamma}^{\star}.
    \end{align*}This, together with $\gamma^{\star} \leq \bar{\gamma}^{\star}$, yields
    $\underline{\gamma}^{\star} = \gamma^{\star} = \bar{\gamma}^{\star}$, which completes the proof.
    \red
    \section{Proof of Theorem \ref{lem:constraint}}
    \label{app:3}
    By the definition ~\ref{def:gdis} of Gelbrich distance, we have 
    \begin{align*}
        Q_{\Delta} \in \{Q_{\Delta} \in \mathbb{S}^m \mid \tr(&Q_{\Delta} + 2\bar{Q}  \\
        -& 2(\bar{Q}^{\frac{1}{2}}(\bar{Q} + Q_{\Delta})\bar{Q}^{\frac{1}{2}})^{\frac{1}{2}}) \leq \rho^2\},
    \end{align*}
    By introducing an auxiliary variable $E_{Q} \in \mathbb{S}^{m}_{+}$ such that 
    \begin{align}\label{ineq:EQ}
      E_{Q}^2 \preceq \bar{Q}^{\frac{1}{2}}Q\bar{Q}^{\frac{1}{2}},  
    \end{align}
    we can be further recast it as $E_Q \preceq (\bar{Q}^{\frac{1}{2}}Q\bar{Q}^{\frac{1}{2}})^{\frac{1}{2}}$. Therefore, the constraint for $Q_{\Delta}$ can be reformulated as $\tr(Q_{\Delta}+2\bar{Q} -2E_Q) \leq \rho^2$. Furthermore, using Schur Complment Lemma, \eqref{ineq:EQ} is equivalent to \begin{align*}
            \begin{bmatrix}
              \bar{Q}^{\frac{1}{2}} Q_{\Delta} \bar{Q}^{\frac{1}{2}} + \bar{Q}^{2} & E_{Q}\\
              E_{Q} & I
          \end{bmatrix} \succeq 0,
    \end{align*}
    which completes the proof.\red
\bibliographystyle{ieeetr}
\bibliography{ref}
\end{document}